\title{Sharp oracle bounds for monotone and convex regression through aggregation}
\numberwithin{equation}{section}
\DeclareMathOperator*{\argmin}{argmin}
\newcommand{\design}{\mathbb{X}}
\newcommand{\E}{\mathbb{E}_{\vmu}}
\newcommand{\R}{\mathbb{R}}
\newcommand{\Rn}{\mathbb{R}^n}
\newcommand{\vmu}{\boldsymbol{\mu}}
\newcommand{\vtheta}{{\boldsymbol{\theta}}}
\newcommand{\htheta}{{\boldsymbol{\hat \theta}}}
\newcommand{\vu}{\boldsymbol{u}}
\newcommand{\hmu}{\boldsymbol{\hat \mu}}
\newcommand{\vbeta}{\boldsymbol{\beta}}
\newcommand{\vy}{\mathbf{y}}
\newcommand{\vxi}{{\boldsymbol{\xi}}}
\newcommand{\increasings}{\mathcal{S}^\uparrow}
\newcommand{\convexs}{\mathcal{S}^{\rm C}}
\newcommand{\scalednorm}[1]{\left\Vert  #1 \right\Vert}
\newcommand{\scalednorms}[1]{\scalednorm{ #1 }^2}
\newcommand{\zeronorm}[1]{| #1 |_0}
\newcommand{\vomega}{{\boldsymbol{\omega}}}
\begin{document}

\author{\name Pierre C. Bellec \email pierre.bellec@ensae.fr\\ 
    \and
    \name Alexandre B. Tsybakov \email alexandre.tsybakov@ensae.fr\\ 
    \addr ENSAE,\\ 
    3 avenue Pierre Larousse\\ 
92240 Malakoff, France}
\maketitle

\begin{abstract}
    We derive oracle inequalities 
    for the problems of isotonic and convex regression
    using the combination of $Q$-aggregation procedure and sparsity pattern aggregation.
    This improves upon the previous results
    including the oracle inequalities for the constrained least squares estimator.
    One of the improvements is that our oracle inequalities 
 are sharp, i.e., 
with leading constant  1.
It allows us to obtain bounds for the minimax regret thus accounting for model misspecification, which was not possible based on the previous results. 
Another improvement is that we obtain oracle inequalities both with high probability and in expectation.
\end{abstract}

\section{Introduction}
Assume that we have the observations
\begin{equation}
    \label{eq:observations}
    Y_i = \mu_i + \xi_i, \qquad i=1,...,n,
\end{equation}
where $\vmu =(\mu_1,...,\mu_n)^T \in\Rn$ is unknown,
$\vxi = (\xi_1,...,\xi_n)^T$ is a noise vector with $n$-dimensional Gaussian distribution
$\mathcal{N}(0,\sigma^2 I_{n\times n})$ where $\sigma>0$.
We observe $\vy = (Y_1,...,Y_n)^T$ and we want to estimate $\vmu$. We can interpret $\mu_i$ as the values $f(X_i)$ of an unknown regression function $f:{\cal X}\to \R$ at given non-random points $X_i\in{\cal X}$, $i=1,\dots,n$, where ${\cal X}$ is an abstract set. Then, the equivalent setting is that we observe $\vy$ along with $(X_1,\dots,X_n)$ but the values of $X_i$ are of no interest and can be replaced by their indices if we  measure the loss in a discrete norm. Namely, for any $\vu\in\Rn$ we consider
 the scaled (or the empirical) norm $\Vert\cdot\Vert$ defined by 
 \begin{equation}
     \scalednorms{\vu} = \frac{1}{n} \sum_{i=1}^n u_i^2.
\end{equation}
We will measure the error  of an estimator $\hmu$ of $\vmu$ 
by the distance $\|\hmu - \vmu\|$. 
Let $\increasings$ be the set of all non-decreasing sequences:
\begin{equation}
    \increasings \coloneqq \{\vu=(u_1,...,u_n)\in\Rn:  u_i \le u_{i+1}, \quad i=1,...,n-1\}.
\end{equation}
For a subset $\cal S$ of $\increasings$, and any $\vmu\in \Rn$ the quantity $\min_{\vu\in {\cal S}}\Vert\vu - \vmu\Vert$
is the smallest approximation error achievable by a sequence in the set ${\cal S}$.
This quantity defines a benchmark or oracle performance on ${\cal S}$. The accuracy of an estimator $\hmu$ with respect to the oracle for any $\vmu$, not necessarily $\vmu\in {\cal S}$, can be characterized by the excess loss $\|\hmu - \vmu\| - \min_{\vu\in {\cal S}}\Vert\vu - \vmu\Vert$. This is a measure of performance of $\vmu$ under model misspecification. One can also consider the expected quantities $R_1(\hmu,\vmu)=\E\|\hmu - \vmu\| - \min_{\vu\in {\cal S}}\Vert\vu - \vmu\Vert$ or $R_2(\hmu,\vmu)=\E\|\hmu - \vmu\|^2 - \min_{\vu\in {\cal S}}\Vert\vu - \vmu\Vert^2$ known under the name of regret measures. Here, $\E$ denotes the expectation with respect to the distribution of $\vy$ satisfying \eqref{eq:observations}. The minimax regret is defined as $\min_{\hmu}\max_{\vmu\in \Rn}R_i(\hmu,\vmu)$ for $i=1,2$, where $\min_{\hmu}$ denotes the minimum over all estimators. We can characterize the performance of an estimator 
$\tilde\vmu$ by the closeness of its maximal regret $\max_{\vmu\in \Rn}R_i(\tilde\vmu,\vmu)$ to the minimax regret. 
This approach to measure the performance of estimators under model misspecification was pioneered by Vapnik and Chervonenkis who called it the criterion of minimax of the loss~\cite[Chapter 6]{VapnikChervonenkis}. In this paper, we follow this approach and establish non-asymptotic bounds for the maximal regret for some classes ${\cal S}$ of monotone and convex functions. 

When the model is well-specified, i.e., the true function $\vmu$ belongs to the class ${\cal S}$, the approximation error vanishes and instead of the minimax regret it is natural to consider the minimax risk defined either as $\min_{\hmu}\max_{\vmu\in{\cal S}}\E\|\hmu - \vmu\|$ or as $\min_{\hmu}\max_{\vmu\in{\cal S}}\E\|\hmu - \vmu\|^2$ (the minimax squared risk). It is easy to see that the minimax risk is not greater than the minimax regret. A classical problem in nonparametric statistics is to study the behavior of minimax risks for different classes ${\cal S}$. In particular, there exist results concerning the minimax risks for classes of monotone and convex functions in our setting. We review some of them below. The behavior of the minimax regret is much less studied. For a recent overview and some general results we refer to \cite{RakhlinSridharanTsybakov} where it is shown that the rate of minimax regret can be different from that of the minimax risk.  Note that \cite{RakhlinSridharanTsybakov} studies the prediction problem with i.i.d. observations, which is a setting different from ours.  

A well-studied estimator under the monotonicity and convexity assumptions is the least squares estimator 
\begin{equation}
\label{eq:lse-increasing}
    \hmu^{LS}({\cal S}) \in \argmin_{\vu\in{\cal S}} \scalednorms{\vy - \vu}.
\end{equation}
In \cite{NemirovskiPolyakTsybakov} it was shown that $\hmu^{LS}({\cal S})$ attains, up to logarithmic factors, the rates $n^{-2/3}$ and $n^{-4/5}$ of the mean squared risk for  classes ${\cal S}$ of monotone and convex functions respectively and that these rates are optimal up to logarithmic factors when the minimax squared risk is used as a criterion. 
Under monotonicity constraints, the rate $n^{-2/3}$ 
was later observed in different settings, see for instance \cite{banerjee2001likelihood,balabdaoui2007estimation}.

One class of monotone functions we will be interested in here is defined~as $$\increasings(V)=\{\vmu\in\increasings:\ V(\vmu)\le V\}$$ where $V(\vmu) = \mu_n - \mu_1$ for any $\vmu=(\mu_1,\dots,\mu_n)\in\increasings$, and $V>0$ is a given constant. In \cite{meyer2000degrees,zhang2002risk} it was shown that 
 for any $\vmu\in\increasings$ we have
    \begin{equation}
        \E \scalednorms{\hmu -  \vmu} \le
        c
        \max\left(
            \left(\frac{\sigma^2 V(\vmu)}{n}\right)^{2/3}
            ,\frac{\sigma^2 \log n}{n}
        \right)
        \label{eq:lse-n23}
    \end{equation}
for $\hmu= \hmu^{LS}(\increasings)$ and some absolute constant $c>0$.  This immediately implies an upper bound on the minimax risk on $\increasings(V)$. A recent paper \cite{chatterjee2013risk} establishes the oracle inequality
\begin{equation}
    \E \scalednorms{ \hmu^{LS}(\increasings)-\vmu} \le C_* \min_{\vu\in\increasings}
    \left(
        \scalednorms{\vmu - \vu} + \frac{c_*\sigma^2k(\vu)}{n} \log \frac{en}{k(\vu)}    \right)     
    \label{eq:lse-oracle-ineq}
\end{equation}
valid for  all $\vmu\in \increasings$ 
where either $C_*=6, c_*=1$  \cite[inequality (18)]{chatterjee2013risk} or 
$C_*=4, c_*=4$ \cite[inequality (30)]{chatterjee2013risk}.
Here, $k(\vu)\ge 1$ for $\vu=(u_1,\dots,u_n) \in \increasings$ is the integer such that $k(\vu)-1$ is the number
of inequalities $u_i\le u_{i+1}$ that are strict for $i=1,\dots,n-1$ (number of jumps of $\vu$).
Inequality \eqref{eq:lse-oracle-ineq} implies (up to a logarithmic factor) a bound as in  \eqref{eq:lse-n23} and also gives some more 
insight into the problem. For example, \eqref{eq:lse-oracle-ineq} shows that the fast rate $\frac{\log n}{n}$ is achieved if $\vmu$ has only one jump or a fixed, independent of $n$, number of jumps. This is not granted by  \eqref{eq:lse-n23}.

 Along with the least squares estimator, one may consider estimation of monotone functions via penalized least squares with total variation penalty. The corresponding estimator $\hmu^{TV}$ is defined as
 \begin{equation}
    \hmu^{TV} \in \argmin_{\vu\in\Rn}\left(
    \frac{1}{2} \scalednorms{\vu - \vy}
    + \lambda \sum_{i=1}^{n-1} |u_{i+1}-u_i|\right),
    \label{eq:def-tv}
\end{equation}
where $\lambda > 0$ is a tuning parameter.
 Statistical properties of this estimator were first studied in \cite{mammen1997locally} where it was shown that $\|\hmu^{TV}-\vmu\|$ attains the optimal rate $n^{-1/3}$ in probability on the class of functions of bounded variation (and thus on $\increasings(V)$).
Recently, the performance of $\hmu^{TV}$
was analyzed in \cite{dalalyan2014prediction}
by considering $\hmu^{TV}$ as a special instance of the Lasso estimator.
If $\vmu^\uparrow$ is the projection of $\vmu$ onto $\increasings$,
$\delta\in(0,1)$ is a constant,
and the tuning parameter $\lambda$ is given by
\begin{equation}
    \lambda =
    \sigma \sqrt{\frac{\log(n/\delta)}{k^* n}}
    \qquad \text{where } k^* = \left(
        \frac{V(\vmu^{\uparrow})^2 n \log(n/\delta)}{\sigma^2}
    \right)^{1/3},
\end{equation}
the estimator $\hmu^{TV}$
satisfies with probability greater than $1-2 \delta$
the following oracle inequality \cite[Proposition 6]{dalalyan2014prediction}:
\begin{eqnarray} \label{eq:soi-lasso-n23}
    \scalednorms{\hmu^{TV} - \vmu}
    &\le&
    \scalednorms{\vmu^{\uparrow} - \vmu}
    + 6 \left(
        \frac{\sigma^2 V(\vmu^\uparrow) \sqrt{\log(n/\delta)} }{n}
    \right)^{2/3}
    \\
    && +
    \frac{2\sigma^2(1+2\log(1/\delta))}{n}
   \nonumber
\end{eqnarray}
for all $\vmu\in \Rn$. It follows from \eqref{eq:soi-lasso-n23} that if the tuning parameter is chosen correctly,
the estimator $\hmu^{TV}$
achieves, up to a logarithmic factor,  the minimax rate $n^{-2/3}$ in probability
on the class $\increasings(V)$. Also, \eqref{eq:soi-lasso-n23} implies a bound for 
the excess losses $\|\hmu^{TV} - \vmu\|^i - \min_{\vu\in \increasings(V)}\Vert\vu - \vmu\Vert^i$, $i=1,2$, corresponding to the class $\increasings(V)$. However, \eqref{eq:soi-lasso-n23} does not allow us to evaluate the expected 
regrets $R_i(\hmu^{TV},\vmu)$ since $\hmu^{TV}$ depends on $\delta$.
It is also shown in \cite[Proposition 4]{dalalyan2014prediction} that if $\lambda = 2\sigma \sqrt{(2/n)\log(n/\delta)}$,
the estimator $\hmu^{TV}$ satisfies
\begin{equation}
    \scalednorms{\hmu^{TV} - \vmu}
    \le
    \min_{\vu\in\Rn}
    \left(
        \scalednorms{\vu - \vmu}
        + \frac{4 \sigma^2 k(\vu) \log(n/\delta)}{n} r_n(\vu)    \right)
    \label{eq:soi-lasso-sparse}
\end{equation}
with probability greater than $1-2\delta$,
where $k(\vu)-1$ for $\vu\in\Rn$ is the number of jumps of $\vu$, i.e., the cardinality of the set
$\{i\in \{1,...,n-1\}: \, u_i\ne u_{i+1}\}$, $r_n(\vu) = 3 + 256(\log(n) + (n/\Delta(\vu)))$
and $\Delta(\vu)$ is the minimum distance between two jumps in the sequence $\vu$:
\begin{equation*}
  \Delta(\vu) =
    \min \left\{
        d\ge 1: \ \exists k\in\{1,...,n\} \text{ with }
        u_{k+1}\ne u_k \text{ and } u_{k+d+1}\ne u_{k+d}
    \right\}.
    \end{equation*}
 
 The expressions on the right hand sides of \eqref{eq:lse-oracle-ineq} and \eqref{eq:soi-lasso-sparse}
are small if the unknown sequence $\vmu$ is well approximated
by a piecewise constant sequence with not too many pieces.
In this regard, the two bounds have some similarity to sparsity oracle inequalities
in high-dimensional linear regression, cf. \cite{RigolletTsybakov2011,rigollet2012sparse,tsybakovICM}. This similarity can be easily explained as follows.  
Write \eqref{eq:observations} in the equivalent form 
$$
\vy= \design\vbeta^* + \vxi,
$$
with the matrix $\design=(X_{ij})_{i=1,...,n,\;j=1,...,n}$
where $X_{ij}= 1$ if $j\le i$ and $X_{ij}= 0$ otherwise, and $\vbeta^*=(\beta^*_1,\dots,\beta^*_n)$ where $\beta^*_1 = \mu_1$ and  $\beta^*_i = \mu_i - \mu_{i-1}$ for $i=2,\dots,n$. With this notation, $k(\vmu) \in  \{\zeronorm{\vbeta^*}, 1 + \zeronorm{\vbeta^*}\}$, where $\zeronorm{\vbeta^*}$ denotes the number of non-zero components of $\vbeta^*$.  The value $k(\vmu)$ is small when
$\vbeta^*$ is sparse.  Thus, the problem of estimation of piecewise constant sequence $\vmu$ with small number of pieces can  be considered as the problem of prediction in sparse linear regression with a specific design matrix $\design$. Similarly, we may write $\vu=\design\vbeta$,
for $\vbeta$ with components $\beta_1 = u_1$ and $\beta_i = u_i - u_{i-1}$ for $i=2,\dots,n$. These remarks suggest that we can apply the theory of sparsity oracle inequalities, in particular, sparsity pattern aggregation (cf. \cite{RigolletTsybakov2011,rigollet2012sparse,tsybakovICM}) in the context of monotone estimation described above. Similar observation is valid for estimation under convexity constraints (see Section \ref{sec:convex} below). In the present paper, we develop this argument using as a building block the $Q$-aggregation procedures \cite{rigollet2012kullback,dai2012deviation,dai2014aggregation,bellec2014affine}.
In particular, we  construct an estimator $\hmu$ such that
    \begin{equation}
        \E\scalednorms{\hmu -\vmu}
        \le
        \min_{\vu\in\increasings}
        \left(
        \scalednorms{\vmu - \vu} + \frac{c \sigma^2k(\vu)}{n} \log \frac{en}{k(\vu)}
        \right), \quad \forall \ \vmu\in\Rn,
        \label{eq:rhs-beta} 
    \end{equation}
    for some absolute constant $c>0$. Note that \eqref{eq:rhs-beta} is a sharp oracle inequality (i.e., an inequality with leading constant 1).
 It   improves upon the oracle inequality \eqref{eq:lse-oracle-ineq} for the least squares estimator where the leading constant $C_*$ is noticeably greater than~1 and the bound is valid only for $\vmu\in\increasings$. 
The advantage of having leading constant~1 and arbitrary $\vmu$ in 
\eqref{eq:rhs-beta} is that it allows us to derive bounds on the excess risk and on the minimax regret, which was not possible based on the previous results. We also obtain sharp oracle inequalities with high probability for the same estimator.  In addition, we show that it satisfies stronger sharp inequalities with the minimum $\min_{\vu\in\increasings}$ on the right hand side of \eqref{eq:rhs-beta} replaced by $\min_{\vu\in\Rn}$.  
This implies that our results are invariant to the direction of monotonicity; they remain valid if we replace everywhere monotone increasing by monotone decreasing functions.
Finally, we derive similar results for the problem of estimation under the  convexity constraints improving an oracle inequality obtained in~\cite{guntuboyina2013global}.

\section{Sparsity pattern aggregation for piecewise constant sequences} \label{s:spa}

For any non-empty set $J\subseteq\{1,...,n-1\}$, let $|J|$ denote the cardinality of $J$ and define
\begin{equation}
    \pi_J \coloneqq \frac{\exp(-|J|)}{H \; {n-1 \choose |J|}},
        \qquad
        H \coloneqq \sum_{i=0}^{n-1} \exp(-i).
        \label{eq:prior-monotone}
\end{equation}
Let $P_J\in \R^{n\times n}$ be the projector on the linear subspace $V_J$ of $\Rn$ defined by
\begin{equation}
    V_J \coloneqq \Big\{\vu\in\Rn: \; \forall i \in \{1,...,n-1\} \setminus J,\; u_{i+1} = u_i \Big\}.
\end{equation}
In words, $V_J$ is the space of all piecewise constant sequences that have jumps only at points in $J$. Given a vector $\vy$ of observations and $\vtheta = (\theta_J)_{J\subseteq \{1,...,n-1\}}$ where each $\theta_J\in \R$, let
\begin{equation}
    \vmu_{\vtheta} = \sum_{J\subseteq\{ 1,...,n-1\}} \theta_J P_J \vy.
\end{equation}
Finally, let 
\begin{equation}
\label{eq:def-q}
    \hmu^{Q} = \vmu_{\htheta}
\end{equation} 
where $\htheta$ is the solution of the optimization problem
\begin{align*}
    \min_{\vtheta\in \Lambda} \hspace{1em}& 
    \scalednorms{\vmu_\vtheta - \vy} 
    + \sum_{J\subseteq\{1,...,n-1\}} \theta_J 
    \left(
        \frac{2 \sigma^2 |J|}{n} + \frac{1}{2} \scalednorms{\vmu_\vtheta - P_J\vy}
        + \frac{46 \sigma^2}{n} \log\frac{1}{\pi_J} 
    \right)
    \end{align*}
    where
    $$
    \Lambda=\left\{\vtheta : \
     \theta_J \ge 0   \text{ for all } J\subseteq\{1,...,n-1\}, \text{ and }
              \sum_{J\subseteq\{1,...,n-1\}} \theta_J = 1
\right\}.
    $$
This optimization problem is a convex quadratic program with a simplex constraint.
It performs aggregation of the linear estimators $(P_J \vy)_{J\subseteq\{1,...,n-1\}}$
using the $Q$-aggregation procedure \cite{dai2012deviation,dai2014aggregation,bellec2014affine} with the prior weights \eqref{eq:prior-monotone}.
As the size of this quadratic program is of order $2^n$,
it is a computationally hard problem.
The estimator $\hmu^Q$ satisfies the following sharp oracle inequalities.

\begin{theorem}
    \label{thm:q-monotone}
    Let $\vmu\in\Rn$, $n\ge2$, and assume that the noise vector $\vxi$ has distribution $\mathcal{N}(0, \sigma^2 I_{n\times n})$.
    There exist absolute constants $c,c'>0$ such that for all $\delta\in(0,1/3)$, the estimator $\hmu^Q$ 
    satisfies with probability at least $1-3\delta$,
    \begin{equation}
        \scalednorms{\hmu^{Q} - \vmu} \le \min_{\vu\in\Rn}
        \left(
            \scalednorms{\vmu - \vu} +
            \frac{ c \sigma^2 k(\vu))}{n} 
            \log \frac{en}{ k(\vu)}
        \right)
            + \frac{c\sigma^2 \log(1/\delta)}{n},
        \label{eq:soi-q-deviation}
    \end{equation}
    and
    \begin{equation}
        \E \scalednorms{\hmu^{Q} - \vmu} \le \min_{\vu\in\Rn}
        \left(
            \scalednorms{\vmu - \vu} + 
            \frac{ c' \sigma^2k(\vu)}{n} 
            \log \frac{en}{k(\vu)}
        \right).
        \label{eq:soi-q}
    \end{equation}
\end{theorem}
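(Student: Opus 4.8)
The plan is to read the optimization defining $\htheta$ as the $Q$-aggregation functional with mixing weight $1/2$ applied to the linear estimators $(P_J\vy)_{J\subseteq\{1,\dots,n-1\}}$ with prior $(\pi_J)$, and then to invoke the sharp oracle inequality for $Q$-aggregation of affine estimators established in \cite{bellec2014affine,dai2014aggregation}. To make the reduction explicit, I would first use that $\vmu_\vtheta=\sum_J\theta_J P_J\vy$ is a convex combination when $\vtheta\in\Lambda$, so the parallel-axis identity
\begin{equation*}
\sum_{J}\theta_J\scalednorms{\vmu_\vtheta-P_J\vy}=\sum_{J}\theta_J\scalednorms{P_J\vy-\vy}-\scalednorms{\vmu_\vtheta-\vy}
\end{equation*}
holds. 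Substituting it into the objective turns it into $\tfrac12\scalednorms{\vmu_\vtheta-\vy}+\tfrac12\sum_J\theta_J\scalednorms{P_J\vy-\vy}+\sum_J\theta_J\bigl(\tfrac{2\sigma^2|J|}{n}+\tfrac{46\sigma^2}{n}\log\tfrac1{\pi_J}\bigr)$, which is precisely the $Q$-aggregation criterion with penalty equal to a degrees-of-freedom term plus a prior term.

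Each $P_J$ is the orthogonal projector onto $V_J$, so $\mathrm{tr}(P_J)=\dim V_J=|J|+1$, and the degrees-of-freedom penalty $2\sigma^2|J|/n$ is exactly what calibrates the master bound to keep leading constant $1$ on the bias. Applying the $Q$-aggregation oracle inequality I would obtain, with probability at least $1-3\delta$,
\begin{equation*}
\scalednorms{\hmu^Q-\vmu}\le\min_{J}\Bigl(\scalednorms{(I-P_J)\vmu}+\frac{c_1\sigma^2(|J|+1)}{n}+\frac{c_2\sigma^2}{n}\log\frac1{\pi_J}\Bigr)+\frac{c_3\sigma^2\log(1/\delta)}{n},
\end{equation*}
where the bias term comes from the Pythagorean decomposition $\scalednorms{P_J\vy-\vmu}=\scalednorms{(I-P_J)\vmu}+\scalednorms{P_J\vxi}$ (the ranges of $P_J$ and $I-P_J$ being orthogonal), the noise contribution $\scalednorms{P_J\vxi}$ of expectation $\sigma^2(|J|+1)/n$ having been absorbed into the dimension penalty.

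Next I would bound the prior term by $\log\tfrac1{\pi_J}=|J|+\log H+\log\binom{n-1}{|J|}$, use $H<(1-e^{-1})^{-1}<e$, whence $\log H<1$, together with $\binom{n-1}{|J|}\le(e(n-1)/|J|)^{|J|}$, and compare the resulting quantity against $(|J|+1)\log\frac{en}{|J|+1}$ by routine inequalities (the case $J=\emptyset$, with $\pi_\emptyset=1/H$ and $\dim V_\emptyset=1$, being handled directly). This shows that for every $J$ the two penalty terms are at most $\frac{c\sigma^2(|J|+1)}{n}\log\frac{en}{|J|+1}$. Finally, for an arbitrary $\vu\in\Rn$ I would take $J=J(\vu)$, the set of indices where $\vu$ jumps; then $\vu\in V_{J(\vu)}$ gives $\scalednorms{(I-P_{J(\vu)})\vmu}\le\scalednorms{\vu-\vmu}$, while $|J(\vu)|+1=k(\vu)$. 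Since the right-hand side above is a minimum over $J$, evaluating at $J=J(\vu)$ and then minimizing over $\vu\in\Rn$ yields \eqref{eq:soi-q-deviation}.

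For the bound in expectation \eqref{eq:soi-q}, I would integrate the tail of \eqref{eq:soi-q-deviation}: its minimum term is deterministic, so with $Z$ the excess of $\scalednorms{\hmu^Q-\vmu}$ over that term, the inequality $\proba{Z>c\sigma^2\log(1/\delta)/n}\le3\delta$ for all $\delta\in(0,1/3)$ gives, after the change of variables $t=c\sigma^2\log(1/\delta)/n$ in $\E Z_+=\int_0^\infty\proba{Z>t}\,dt$, the bound $\E Z\le C\sigma^2/n$; since $k(\vu)\ge1$ makes $k(\vu)\log\frac{en}{k(\vu)}\ge1$, this extra $C\sigma^2/n$ is absorbed by enlarging the constant into $c'$, proving \eqref{eq:soi-q}. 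The main obstacle is the master $Q$-aggregation inequality itself: securing leading constant exactly $1$ on the approximation error uniformly over the $2^{n-1}$ subspaces $V_J$ requires controlling the Gaussian noise interacting with all projectors at once — concentration of the quadratic forms $\scalednorms{P_J\vxi}$ and of the linear cross terms around their means, tamed by the prior through $\log(1/\pi_J)$ — for which the tail inequality for quadratic forms of Gaussian vectors \cite{hsu2012tail} is the natural tool, the delicate point being that the penalty $2\sigma^2|J|/n$ must offset $\mathrm{tr}(P_J)$ without leaving a spurious factor larger than one in front of $\scalednorms{(I-P_J)\vmu}$.
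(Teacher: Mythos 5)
Your proposal follows essentially the same route as the paper: invoke the sharp oracle inequality for $Q$-aggregation of the projection estimators $(P_J\vy)_J$ from \cite{bellec2014affine}, control $\scalednorms{P_J\vy-\vmu}$ via the Pythagorean decomposition and the quadratic-form tail bound of \cite{hsu2012tail}, bound $\log(1/\pi_J)$ by $c(|J|+1)\log\tfrac{en}{|J|+1}$, identify $\vu\in V_{J(\vu)}$ with $|J(\vu)|+1=k(\vu)$ so that $\min_J\min_{\vu\in V_J}=\min_{\vu\in\Rn}$, and integrate the tail for the bound in expectation. The only differences are cosmetic (you rewrite the objective via the parallel-axis identity and derive the bound on $\log(1/\pi_J)$ from the binomial estimate rather than citing it), so the argument is correct and matches the paper's proof.
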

\begin{proof}
    Let $J\subseteq\{1,...,n-1\}$. Denote by $d=|J|+1$ the dimension of the subspace $V_J$.
    Then, the projection estimator $P_J\vy$ satisfies
    with probability at least $1-\delta$ (see, for example, \cite{hsu2012tail}):
    \begin{align}\nonumber
        \scalednorms{P_J\vy - \vmu} & \le \scalednorms{P_J\vmu - \vmu} + \frac{d + 2\sqrt{d\log(1/\delta)} + 2 \log(1/\delta)}{n} \\
                                    & \le \min_{\vu\in V_J}\scalednorms{\vu - \vmu} + \frac{2(|J|+1) + 3 \log(1/\delta)}{n}.
    \label{eq:th21_1}
    \end{align}
    The sharp oracle inequality from \cite{bellec2014affine} yields
    that with probability at least $1-2\delta$ for all $J\subseteq\{1,...,n-1\}$ we have
    \begin{align}  \label{eq:th21_2}
        \scalednorms{\hmu^Q - \vmu} & \le \scalednorms{P_J\vy - \vmu} + C\sigma^2 \log\frac{1}{\pi_J} + C \sigma^2 \log(1/\delta),
    \end{align}
    for some absolute constant $C>0$.
    Combining   \eqref{eq:th21_1} and   \eqref{eq:th21_2} with the union bound and the inequality  (cf. \cite[(5.4)]{rigollet2012sparse})
    $\log(1/\pi_J) \le 2(|J|+1) \log(en/(|J|+1)) + 1/2$, we find that with probability at least $1-3\delta$,
   \begin{align*}  
        \scalednorms{\hmu^Q - \vmu} & \le \min_{J\subseteq\{1,...,n-1\}}\min_{\vu\in V_J}\left(\scalednorms{\vmu-\vu} + \frac{c\sigma^2(|J|+1)}{n} \log\left(\frac{en}{|J|+1}\right) \right) 
        \\
        &\quad \ \ + c \sigma^2 \log(1/\delta)
    \end{align*}
 where $c>0$ is an absolute constant.  Since  $|J|+1 = k(\vu)$ for all $\vu\in V_J$ and $\min_{J\subseteq\{1,...,n-1\}} \min_{\vu\in V_J}=\min_{\vu\in \Rn}$, the bound \eqref{eq:soi-q-deviation} follows. 
Finally, \eqref{eq:soi-q} is obtained from \eqref{eq:soi-q-deviation} by integration.
\end{proof}

We now discuss some corollaries of Theorem~\ref{thm:q-monotone}. First, it follows that \eqref{eq:rhs-beta} is satisfied for $\hmu=\hmu^Q$, so the remarks after \eqref{eq:rhs-beta} apply. Next, in view of \eqref{eq:soi-q}, for the class of monotone sequences with at most $k$ jumps $\increasings_k = \{\vu\in \increasings: k(\vu)\le k\}$ we have the following bounds for the maximal expected regrets 
\begin{align}  
       \max_{\vmu\in\Rn} \left(\E\|\hmu^Q - \vmu\| - \min_{\vu\in \increasings_k}\Vert\vu - \vmu\Vert\right) & \le c\sqrt{\frac{\sigma^2k}{n} \log\left(\frac{en}{k}\right)}, \label{eq:max_expected_regret}
       \\
       \max_{\vmu\in\Rn} \left(\E\|\hmu^Q - \vmu\|^2 - \min_{\vu\in \increasings_k}\Vert\vu - \vmu\Vert^2 \right) &\le \frac{c\sigma^2k}{n} \log\left(\frac{en}{k}\right), \label{eq:max_expected_regret2}
    \end{align}
where $c>0$ is an absolute constant.
The same bounds hold for the minimax risks over $\increasings_k$ since the minimax risk is smaller than  the minimax regret.
\Cref{prop:lower} below shows that the bounds \eqref{eq:max_expected_regret} and \eqref{eq:max_expected_regret2} are optimal
up to logarithmic factors.

Finally, consider the consequences of Theorem~\ref{thm:q-monotone} for the class $\increasings(V)$. 
To this end, define the integer  $k^*$  such that
    $$
    k^* = \min\left\{ m\in \mathbb N: m \ge \left(\frac{V(\vmu)^2 n}{\sigma^2\log(en)}\right)^{1/3}\right\}
    $$
if the set $\left\{ m\in \mathbb N: m \ge \left(\frac{V(\vmu)^2 n}{\sigma^2\log(en)}\right)^{1/3}\right\}$ is non-empty, and $k^*=1$ otherwise.
We will need the following lemma.

\begin{lemma}
    \label{prop:approximation}
    Let $\vmu\in\increasings$ and let $1\le k\le n$ be an integer. Then
    there exists a sequence $\bar\vu\in\increasings_k$
    such that
    \begin{equation}
    \|\bar\vu - \vmu\| \le \frac{V(\vmu)}{2k}. \label{eq:approximation0}
    \end{equation}
    Next,     there exists a sequence $\bar\vu\in\increasings_{k^*}$
    such that
    \begin{equation}
        \scalednorms{\bar\vu - \vmu}
        \le
        \frac{1}{4}
        \max\left( 
            \left(\frac{\sigma^2 V(\vmu) \log(en)}{n}\right)^{2/3}
            ,
            \frac{\sigma^2 \log(en)}{n}
        \right)
        .
        \label{eq:approximation}
    \end{equation}
    In addition, 
    \begin{equation}
        \frac{\sigma^2 k^*}{n} \log\frac{en}{k^*}
        \le
        2 \max\left( 
            \left(\frac{\sigma^2 V(\vmu) \log(en)}{n}\right)^{2/3}
            ,
            \frac{\sigma^2 \log(en)}{n}
        \right)
        .
        \label{eq:simple-bound}
    \end{equation}
\end{lemma}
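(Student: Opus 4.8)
The plan is to prove the three displayed claims in order, with \eqref{eq:approximation} and \eqref{eq:simple-bound} both reduced to \eqref{eq:approximation0} by elementary algebra and a case split on the size of $k^*$.

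First I would establish \eqref{eq:approximation0} by a quantization argument in the range of $\vmu$. Since $\vmu\in\increasings$, all its entries lie in $[\mu_1,\mu_n]$, an interval of length $V(\vmu)$. I partition this interval into $k$ consecutive subintervals of equal length $V(\vmu)/k$ (using half-open intervals to fix the endpoint convention) and define $\bar\vu$ by replacing each $\mu_i$ with the midpoint of the subinterval containing it. The map sending a value to the midpoint of its subinterval is non-decreasing, and $\vmu$ is non-decreasing, so $\bar\vu$ is non-decreasing; moreover $\bar\vu$ takes at most $k$ distinct values, hence $\bar\vu\in\increasings_k$. Each coordinate obeys $|\bar u_i-\mu_i|\le V(\vmu)/(2k)$, whence $\scalednorms{\bar\vu-\vmu}\le \left(V(\vmu)/(2k)\right)^2$, and taking square roots gives \eqref{eq:approximation0}.

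Next I would deduce \eqref{eq:approximation} by specializing to $k=k^*$. Write $c=\left(V(\vmu)^2 n/(\sigma^2\log(en))\right)^{1/3}$. If $k^*\ge n$, then $\vmu$ itself lies in $\increasings_n\subseteq\increasings_{k^*}$, so $\bar\vu=\vmu$ makes the left-hand side zero and the bound is trivial; otherwise I apply \eqref{eq:approximation0} with $k=k^*$ to get $\scalednorms{\bar\vu-\vmu}\le V(\vmu)^2/(4(k^*)^2)$. I then split on the definition of $k^*$. When $k^*\ge c$, substituting this lower bound and simplifying the fractional exponents yields exactly $\frac{1}{4}\left(\sigma^2 V(\vmu)\log(en)/n\right)^{2/3}$, the first term in the maximum. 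When $k^*=1$ (which forces $c\le1$, i.e.\ $V(\vmu)^2\le \sigma^2\log(en)/n$), the same estimate gives $V(\vmu)^2/4\le \frac{1}{4}\,\sigma^2\log(en)/n$, the second term. In both cases \eqref{eq:approximation} holds.

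Finally, for \eqref{eq:simple-bound} I would use $k^*\ge1$ to write $\log(en/k^*)\le\log(en)$ and again split on $k^*$. If $k^*=1$ the left-hand side equals $\sigma^2\log(en)/n$, dominated by the second term of the maximum. If $k^*\ge2$ then $c>1$, so $k^*\le c+1\le 2c$; inserting $k^*\le 2c$ and simplifying the powers of $\sigma^2$, $\log(en)$, $n$ and $V(\vmu)$ gives the bound $2\left(\sigma^2 V(\vmu)\log(en)/n\right)^{2/3}$, twice the first term of the maximum (this computation also covers the regime $k^*\ge n$). The arguments are essentially routine; the points requiring care are verifying that the quantization preserves monotonicity while keeping at most $k$ pieces, correctly handling the degenerate branches of the definition of $k^*$ (the ``otherwise'' value $k^*=1$ and the regime $k^*\ge n$), and checking that the two branches of each case split line up exactly with the two terms inside the maximum. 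I expect the bookkeeping of the fractional exponents in the last two parts to be the most error-prone step, but it is a direct computation rather than a genuine obstacle.
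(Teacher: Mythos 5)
Your proposal is correct and follows essentially the same route as the paper: the same midpoint-quantization of the range $[\mu_1,\mu_n]$ into $k$ equal subintervals for \eqref{eq:approximation0}, and the same case split on $k^*=1$ versus $k^*>1$ (with $k^*\ge \big(V(\vmu)^2 n/(\sigma^2\log(en))\big)^{1/3}$) for \eqref{eq:approximation} and \eqref{eq:simple-bound}. You in fact supply slightly more detail than the paper, which leaves \eqref{eq:simple-bound} as ``straightforward,'' and your explicit handling of the degenerate regimes of $k^*$ is a harmless extra.
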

\begin{proof}
    To construct the sequence $\bar\vu$,
    consider the $k$ intervals
    \begin{equation}
        I_j= \Big[
            \mu_1 + \frac{j-1}{k} V(\vmu),
            \mu_1 + \frac{j}{k} V(\vmu)
            \Big),
            \qquad
            j=1,...,k-1,
    \end{equation}
    and $I_k = [\mu_1 + \frac{k-1}{k} V(\vmu),\mu_n]$.
    For all $j=1,...,k$, let
    \begin{equation}
        J_j = \{i=1,...,n: \quad \mu_i\in I_j\}.
    \end{equation}
    For any $i\in \{1,...,n\}$ there exists a unique $j\in\{1,...,k\}$
    such that $i\in I_j$.
    Let $\bar u_i = \mu_1 + \frac{j-1/2}{k} V(\vmu)$ for all $i\in I_j$.
    Then the sequence $\bar\vu=(\bar u_1,\dots,\bar u_n)$ is non-decreasing, it has at most $k$ pieces, i.e., $k(\bar\vu)\le k$, and  $|\bar  u_i - \mu_i| \le \frac{V(\vmu)}{2k}$ for  $i=1,...,n$. 
    Thus \eqref{eq:approximation0} follows.
    Next, note that if $k^*=1$, then $V(\vmu)^2 \le \sigma^2 \log(en)/n$.
    If $k^*>1$, then by definition of $k^*$, $V(\vmu)^2 /(k^*)^2 \le (\sigma^2 V(\vmu) \log(en)/n)^{2/3}$. Thus, \eqref{eq:approximation}
    follows.
    The bound \eqref{eq:simple-bound} is straightforward
    by studying the cases $k^*=1$ and $k^*>1$ separately.
\end{proof}

We can now derive the following corollary of Theorem~\ref{thm:q-monotone}.

\begin{corollary}\label{cor:mon}
Under the assumptions of Theorem~\ref{thm:q-monotone}, there exists an absolute constant $c>0$ such that, for any $\vmu\in \increasings$,
\begin{equation}\label{eq:cor:mon1}
\E\|\hmu^Q - \vmu\|^2 \le 
c \,\max\left( 
            \left(\frac{\sigma^2 V(\vmu) \log n}{n}\right)^{2/3}
            ,
            \frac{\sigma^2 \log n}{n}
        \right)
        .
\end{equation}
In addition, for any $V>0$ and  any  $\vmu\in \Rn$ the expected regret  of  $\hmu^Q$ satisfies 
\begin{equation}\label{eq:cor:mon2}
 \E\|\hmu^Q - \vmu\| - \min_{\vu\in \increasings(V)}\Vert\vu - \vmu\Vert
\ \le 
c \,\max\left( 
            \left(\frac{\sigma^2 V \log n}{n}\right)^{1/3}
            ,
            \sigma \sqrt{\frac{\log n}{n}}
        \right)
\end{equation}
where $c>0$ is an absolute constant.
\end{corollary}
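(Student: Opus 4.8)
The plan is to derive both inequalities from the expectation oracle inequality \eqref{eq:soi-q} of Theorem~\ref{thm:q-monotone}, feeding into the minimum on its right-hand side the piecewise constant approximations provided by Lemma~\ref{prop:approximation}. Inequality \eqref{eq:cor:mon1} is essentially a direct substitution, whereas \eqref{eq:cor:mon2} additionally requires passing from the squared loss controlled by \eqref{eq:soi-q} to the unsquared loss that appears in the regret, which is the only step carrying any subtlety.

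First I would prove \eqref{eq:cor:mon1}. Fix $\vmu\in\increasings$ and apply \eqref{eq:soi-q} with $\vu=\bar\vu$, the sequence in $\increasings_{k^*}$ furnished by the second part of Lemma~\ref{prop:approximation}. Since $k(\bar\vu)\le k^*$ and $x\mapsto x\log(en/x)$ is nondecreasing on $[1,n]$, the complexity term obeys $\frac{\sigma^2 k(\bar\vu)}{n}\log\frac{en}{k(\bar\vu)}\le\frac{\sigma^2 k^*}{n}\log\frac{en}{k^*}$, which is controlled by \eqref{eq:simple-bound}, while the approximation error $\scalednorms{\bar\vu-\vmu}$ is controlled by \eqref{eq:approximation}. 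Summing the two bounds and absorbing the numerical constants gives \eqref{eq:cor:mon1}; the replacement of $\log(en)$ by $\log n$ only affects the constant, since $\log(en)\le(1+1/\log 2)\log n$ for $n\ge2$.

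For \eqref{eq:cor:mon2}, fix $V>0$ and $\vmu\in\Rn$, let $\vu^*\in\argmin_{\vu\in\increasings(V)}\scalednorm{\vu-\vmu}$ be the projection of $\vmu$ onto the closed convex set $\increasings(V)$, and set $\mu_0=\min_{\vu\in\increasings(V)}\scalednorm{\vu-\vmu}=\scalednorm{\vu^*-\vmu}$. Then $\vu^*\in\increasings$ with $V(\vu^*)\le V$, so by the first part of Lemma~\ref{prop:approximation} (inequality \eqref{eq:approximation0}) applied to $\vu^*$, for every integer $1\le k\le n$ there is a $\bar\vu\in\increasings_k$ with $\scalednorm{\bar\vu-\vu^*}\le V(\vu^*)/(2k)\le V/(2k)$. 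By Jensen's inequality and then \eqref{eq:soi-q} with $\vu=\bar\vu$,
\[
\E\scalednorm{\hmu^Q-\vmu}\le\sqrt{\E\scalednorms{\hmu^Q-\vmu}}\le\sqrt{\scalednorms{\bar\vu-\vmu}+R_k^2},\qquad R_k^2:=\frac{c'\sigma^2 k}{n}\log\frac{en}{k}.
\]
The triangle inequality $\scalednorm{\bar\vu-\vmu}\le\mu_0+V/(2k)$ together with $\sqrt{a^2+b^2}\le a+b$ for $a,b\ge0$ then yields $\E\scalednorm{\hmu^Q-\vmu}\le\mu_0+\frac{V}{2k}+R_k$, so the regret is at most $\frac{V}{2k}+R_k$ for every admissible $k$.

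It remains to minimize $\frac{V}{2k}+R_k$ over $k$, which is the only real computation and is routine. If $(V^2 n/(\sigma^2\log(en)))^{1/3}\ge1$, I would take $k=\min\{\lceil(V^2 n/(\sigma^2\log(en)))^{1/3}\rceil,n\}$; then, using $\log(en/k)\le\log(en)$, both $\frac{V}{2k}$ and $R_k$ are bounded by a constant times $(\sigma^2 V\log(en)/n)^{1/3}$, the first term in the maximum of \eqref{eq:cor:mon2}. Otherwise $V^2<\sigma^2\log(en)/n$, and taking $k=1$ bounds both $V/2$ and $R_1$ by a constant times $\sigma\sqrt{\log(en)/n}$, the second term. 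Passing from $\log(en)$ to $\log n$ as before finishes the argument. The main obstacle, modest as it is, is the Jensen-and-subadditivity passage from the squared-loss inequality \eqref{eq:soi-q} to a bound on $\E\scalednorm{\hmu^Q-\vmu}$; everything else is substitution into the theorem and the lemma followed by a balancing of two terms.
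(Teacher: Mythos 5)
Your route is the same as the paper's: both parts are read off from the expectation bound \eqref{eq:soi-q}, the first by plugging in the $\increasings_{k^*}$ approximant of Lemma~\ref{prop:approximation} together with \eqref{eq:approximation} and \eqref{eq:simple-bound}, the second by passing through the projection $\vu^*$ of $\vmu$ onto $\increasings(V)$, using Jensen and $\sqrt{a^2+b^2}\le a+b$ to convert the squared-loss inequality into a bound on $\E\|\hmu^Q-\vmu\|$, and then the triangle inequality plus the approximation lemma applied to $\vu^*$. This is exactly the paper's proof of \Cref{cor:mon}, the only cosmetic difference being that you rebalance $V/(2k)+R_k$ over $k$ by hand instead of invoking the pre-packaged $k^*$, \eqref{eq:approximation} and \eqref{eq:simple-bound}.

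There is, however, one step in your balancing that fails as written. You cap $k$ at $n$ and then assert that $V/(2k)$ is bounded by a constant times $\left(\sigma^2 V\log(en)/n\right)^{1/3}$. When the ceiling exceeds $n$, i.e.\ when $V>\sigma n\sqrt{\log(en)}$, this is false: taking for instance $V=n^2\sigma$ gives $V/(2n)=n\sigma/2$, while $\left(\sigma^2V\log(en)/n\right)^{1/3}=\sigma\left(n\log(en)\right)^{1/3}$, which is of smaller order, so no absolute constant works. The cure is cheap: either do not cap $k$ (the paper's $k^*$ is the uncapped ceiling; the construction in the proof of Lemma~\ref{prop:approximation} still produces $\bar\vu\in\increasings$ with $k(\bar\vu)\le n$ and $\|\bar\vu-\vu^*\|\le V/(2k^*)$, and the complexity term is then at most $c'\sigma^2\max_{1\le x\le n}\frac{x}{n}\log\frac{en}{x}=c'\sigma^2$, which is dominated by $\left(\sigma^2V\log(en)/n\right)^{2/3}$ in that regime), or observe that $\increasings_n=\increasings$ so that for $k=n$ one may take $\bar\vu=\vu^*$ and the approximation error is $0$ rather than $V/(2n)$. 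With that one-line repair your argument is complete and coincides with the paper's.
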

\begin{proof} Inequality \eqref{eq:cor:mon1} is straightforward in view of \eqref{eq:soi-q}, \eqref{eq:approximation}, and \eqref{eq:simple-bound}.
To prove \eqref{eq:cor:mon2}, fix any $\vmu\in \Rn$ and consider
$$
\vmu^* \in \argmin_{\vmu'\in \increasings(V)}\|\vmu' - \vmu\|.
$$
From  \eqref{eq:soi-q} and the fact that the function $x\mapsto x\log\left(\frac{en}{x}\right)$ is increasing for $1\le x\le n$ we get
\begin{align*}  
      \E\|\hmu^Q - \vmu\| &\le   \min_{\vu\in \increasings_{k^*}}\left(\Vert\vu - \vmu\Vert + \sqrt{c'\frac{\sigma^2k^*}{n} \log\left(\frac{en}{k^*}\right)} \right)
      \\
      &\le \min_{\vu\in \increasings_{k^*}}\Vert\vu - \vmu^*\Vert + \Vert\vmu^* - \vmu\Vert + \sqrt{c'\frac{\sigma^2k^*}{n} \log\left(\frac{en}{k^*}\right)} 
      \\
      &\le \Vert\vmu^* - \vmu\Vert + 
c'' \,\max\left( 
            \left(\frac{\sigma^2 V \log n}{n}\right)^{1/3}
            ,
            \sigma \sqrt{\frac{\log n}{n}}
        \right)
        \end{align*}
        for an absolute constant $c''>0$ where the last inequality follows from \eqref{eq:approximation} and \eqref{eq:simple-bound}.
\end{proof}

The estimator $\hmu^Q$ shown in \Cref{thm:q-monotone}
satisfies the sharp oracle inequalities both in expectation and with high probability.
Previous results for the least squares estimator \cite{chatterjee2013risk}
were only obtained in expectation
and the results on the $\ell_1$-penalized estimator \eqref{eq:def-tv}
are only obtained with high probability.

Finally, the following result shows
that the upper bounds 
\eqref{eq:max_expected_regret}
and \eqref{eq:max_expected_regret2}
are optimal up to logarithmic factors.

\begin{proposition}
    \label{prop:lower}
   Let $n\ge 2, V>0 \text{ and }  \sigma>0$.~There exist absolute constants $c,c'>0$ such that for any positive integer $k\le n$
    satisfying $k^3 \le 16 n V^2 / \sigma^2$ we have
    \begin{equation}
        \inf_{\hmu}
        \sup_{\vmu\in\increasings_k \cap \increasings(V)}
        \mathbb{P}_{\vmu}
        \left(
            \|\hmu - \vmu\|^2 \ge \frac{c \sigma^2 k}{n}
        \right) > c',
        \label{eq:lower-k-over-n}
    \end{equation}
    where $ \mathbb{P}_{\vmu}$ denotes the distribution of $\vy$ satisfying \eqref{eq:observations} and $\inf_{\hmu}$ is the infimum over all estimators.
\end{proposition}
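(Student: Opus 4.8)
The plan is to reduce estimation to a multiple hypothesis testing problem and to apply Fano's inequality. I would exhibit a finite, well-separated family of monotone staircases inside $\increasings_k\cap\increasings(V)$, indexed by binary perturbation vectors extracted via the Varshamov--Gilbert bound, and then use the standard passage from estimation to testing (see \cite{tsybakov2009introduction}, Chapter~2). Since Varshamov--Gilbert produces a sufficiently rich packing only when $k$ exceeds an absolute constant, I would dispose of the remaining small values of $k$ by a two-point Le Cam argument, which already yields the rate $\sigma^2/n\asymp\sigma^2 k/n$ and is absorbed into the constants $c,c'$.

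For the main regime, partition $\{1,\dots,n\}$ into $k$ consecutive blocks $B_1,\dots,B_k$, each of cardinality at least $\lfloor n/k\rfloor\ge n/(2k)$, and fix the increment $\gamma=\sqrt{c_0\sigma^2 k/n}$ with a small absolute constant $c_0$ chosen later. To each $\omega=(\omega_1,\dots,\omega_k)\in\{0,1\}^k$ I associate the sequence $\vmu^{(\omega)}\in\Rn$ that equals $(j-1)\gamma+\omega_j\gamma$ on the block $B_j$. By construction $\vmu^{(\omega)}$ is piecewise constant with at most $k-1$ jumps, so $\vmu^{(\omega)}\in\increasings_k$; the increment across consecutive blocks is $\gamma+(\omega_{j+1}-\omega_j)\gamma\ge 0$, so $\vmu^{(\omega)}$ is non-decreasing. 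Its total variation is at most $(k-1)\gamma+\gamma=k\gamma$, and the inequality $k\gamma\le V$ holds precisely because $\gamma^2=c_0\sigma^2 k/n$ together with the hypothesis $k^3\le 16nV^2/\sigma^2$ gives $c_0 k^3\le nV^2/\sigma^2$ once $c_0\le 1/16$; thus $\vmu^{(\omega)}\in\increasings(V)$ as well. This matching of the perturbation size $\gamma$ to the total-variation budget is exactly where the assumption $k^3\le 16nV^2/\sigma^2$ is consumed, and I expect it to be the crux of the argument: the perturbations must be large enough to force a separation of order $\sigma^2 k/n$, yet small enough to preserve both monotonicity and the constraint $V(\vmu)\le V$.

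The two remaining computations are routine. The Varshamov--Gilbert lemma produces $\Omega\subseteq\{0,1\}^k$ with $|\Omega|\ge 2^{k/8}$ whose elements are pairwise at Hamming distance at least $k/8$; hence for $\omega\ne\omega'$ in $\Omega$ one has $\scalednorms{\vmu^{(\omega)}-\vmu^{(\omega')}}=\frac{\gamma^2}{n}\sum_{j=1}^k|B_j|(\omega_j-\omega_j')^2\ge\frac{\gamma^2}{2k}\cdot\frac{k}{8}=\frac{\gamma^2}{16}$, while the Kullback--Leibler divergence between the corresponding Gaussian laws equals $\frac{n}{2\sigma^2}\scalednorms{\vmu^{(\omega)}-\vmu^{(\omega')}}\le\frac{n\gamma^2}{2\sigma^2}=\frac{c_0 k}{2}$. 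Choosing $c_0$ small makes the ratio of this divergence to $\log|\Omega|\ge (k/8)\log 2$ less than $1/8$, so Fano's inequality bounds the minimax testing error below by an absolute constant $c'$ for all $k$ above a fixed threshold. Finally, letting $\psi$ be the nearest hypothesis to $\hmu$ and setting $c=c_0/64$, the separation bound shows that the event $\scalednorms{\hmu-\vmu^{(\omega)}}<c\sigma^2 k/n$ forces $\psi=\omega$, whence
$\sup_{\omega\in\Omega}\mathbb{P}_{\vmu^{(\omega)}}\big(\scalednorms{\hmu-\vmu^{(\omega)}}\ge c\sigma^2 k/n\big)\ge\inf_{\psi}\max_{\omega\in\Omega}\mathbb{P}_{\vmu^{(\omega)}}(\psi\ne\omega)\ge c'$,
which is the claimed inequality \eqref{eq:lower-k-over-n}.
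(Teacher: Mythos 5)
Your proposal is correct and follows essentially the same route as the paper's proof: a Varshamov--Gilbert packing of monotone staircases with per-block perturbation $\gamma\asymp\sqrt{\sigma^2 k/n}$, a baseline drift calibrated so that monotonicity and the total-variation budget $V$ are preserved exactly under the hypothesis $k^3\le 16nV^2/\sigma^2$, and the standard reduction to testing via Fano (the paper invokes Theorem 2.7 of Tsybakov (2009), which packages the same argument). The only differences are cosmetic: the paper uses a baseline step of $V/(2k)$ per block while you use $\gamma$, and you explicitly handle small $k$ and non-divisible $n$, which the paper dispatches with a remark.
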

For $k=1,...,n$, take any $V>0$ large enough to satisfy
$k^3 \le 16 n V^2 / \sigma^2$. Then, \Cref{prop:lower} and 
Markov's inequality
 yield the following lower bounds on the minimax risks over the class $\increasings_k$:
\begin{equation}\label{lower_minimax}
\inf_{\hmu}
    \sup_{\vmu\in\increasings_k}
    \E \|\hmu - \vmu\| \ge c \sqrt{\frac{c' \sigma^2 k}{n}},
\quad
    \inf_{\hmu}
    \sup_{\vmu\in\increasings_k}
    \E \scalednorms{\hmu - \vmu} \ge \frac{c c' \sigma^2 k}{n}.
\end{equation}
As the minimax risk is smaller than the minimax regret, \eqref{lower_minimax} also provides lower bounds for the corresponding minimax regrets over $\increasings_k$.
Combining this with 
\eqref{eq:max_expected_regret} and \eqref{eq:max_expected_regret2}
we find that the estimator $\hmu^Q$ achieves up to logarithmic factors the optimal rate with respect to the minimax regret.

Next, Proposition \ref{prop:lower}  implies the following lower bound on the minimax deviation risk on $\increasings(V)$.
\begin{corollary}\label{cor:lower}
Let $n\ge 2, V>0 \text{ and }  \sigma>0$.~There exist absolute constants $c,c'>0$ such that     
\begin{equation}
        \inf_{\hmu}
        \sup_{\vmu\in\increasings(V)}
        \mathbb{P}_{\vmu}
        \left(
            \|\hmu - \vmu\|^2 \ge c \max\left\{\left(\frac{ \sigma^2 V}{n}\right)^{2/3}, \frac{\sigma^2}{n}
       \right\} \right) > c'.
       \label{eq:lower-n23}
           \end{equation}
\end{corollary}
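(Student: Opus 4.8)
The plan is to derive \eqref{eq:lower-n23} directly from Proposition~\ref{prop:lower}, supplemented by an elementary two-point bound, using that the maximum $\max\{(\sigma^2V/n)^{2/3},\sigma^2/n\}$ equals whichever of its two terms is the larger. Since $\increasings_k\cap\increasings(V)\subseteq\increasings(V)$ and passing to a subset can only decrease a supremum, for every admissible integer $k$ (one with $1\le k\le n$ and $k^3\le 16nV^2/\sigma^2$) the lower bound of Proposition~\ref{prop:lower} over $\increasings_k\cap\increasings(V)$ holds a fortiori for $\sup_{\vmu\in\increasings(V)}$. I would therefore produce the two terms of the maximum from two separate devices and then keep, in each regime, the bound attached to the dominant term.

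For the term $\sigma^2/n$ I would use a two-point (Le~Cam) argument valid for every $V>0$. Take the two constant sequences $\vmu^{(0)}=\mathbf 0$ and $\vmu^{(1)}=(\sigma/\sqrt n)\,\mathbf 1$; both lie in $\increasings(V)$ since a constant sequence has $V(\vmu)=0\le V$. They are separated by $\scalednorms{\vmu^{(0)}-\vmu^{(1)}}=\sigma^2/n$, while the Gaussian laws $\mathcal N(\vmu^{(j)},\sigma^2 I_{n\times n})$ have Kullback--Leibler divergence exactly $1/2$. The standard two-point lower bound then gives $\inf_{\hmu}\max_{j\in\{0,1\}}\mathbb P_{\vmu^{(j)}}(\scalednorms{\hmu-\vmu^{(j)}}\ge \sigma^2/(4n))\ge c'$ for an absolute constant $c'>0$, which is \eqref{eq:lower-n23} whenever $\sigma^2/n$ is the larger term.

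For the term $(\sigma^2V/n)^{2/3}$ I would work in the complementary regime $nV^2\ge\sigma^2$, where it is the maximum. The real number $k_0=(nV^2/\sigma^2)^{1/3}$ satisfies $k_0\ge1$ and $\sigma^2 k_0/n=(\sigma^2V/n)^{2/3}$, so I would set $k=\min(n,\lfloor k_0\rfloor)$. This $k$ is admissible: $k\ge1$, $k\le n$, and $k^3\le k_0^3=nV^2/\sigma^2\le 16nV^2/\sigma^2$. When $k_0\le n$ one has $\lfloor k_0\rfloor\ge k_0/2$, hence $\sigma^2 k/n\ge\frac12(\sigma^2V/n)^{2/3}$, and Proposition~\ref{prop:lower} yields \eqref{eq:lower-n23} with the first term realising the maximum.

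The computations are routine bookkeeping; the points needing care are the verification of the constraints $k\le n$ and $k^3\le 16nV^2/\sigma^2$ and the rounding estimate $\lfloor k_0\rfloor\ge k_0/2$. The genuine obstacle is conceptual: the cap $k\le n$ becomes binding once $k_0>n$, i.e. once $V\gtrsim n\sigma$, where the reduction only produces order $\sigma^2$. This is unavoidable, since the isotonic least squares estimator is a non-expansive projection onto a convex cone containing $\vmu$, which forces the minimax risk over $\increasings(V)$ to stay below $\sigma^2$; thus $(\sigma^2V/n)^{2/3}$ is informative precisely in the regime $V\lesssim n\sigma$ where the above choice of $k$ is unconstrained, and there the two bounds combine to give \eqref{eq:lower-n23}.
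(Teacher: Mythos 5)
Your argument follows essentially the same route as the paper's: Proposition~\ref{prop:lower} applied with $k$ of order $(nV^2/\sigma^2)^{1/3}$ yields the $(\sigma^2V/n)^{2/3}$ term, and a two-point reduction to two constant sequences yields the $\sigma^2/n$ term. Your remark about the cap $k\le n$ is a genuine point that the paper's one-line proof glosses over: when $V$ is much larger than $n\sigma$, the trivial estimator $\hmu=\vy$ has risk $\sigma^2\ll(\sigma^2V/n)^{2/3}$, so the stated bound can only hold --- and your choice of $k$ only delivers it --- when $V$ is at most of order $n\sigma$; this is a defect of the statement itself rather than of your proof.
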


To prove this corollary it is enough to  note that if $16 n V^2 / \sigma^2 \ge 1$, by choosing $k$ in Proposition \ref{prop:lower} as the integer part of $(16 n V^2 / \sigma^2)^{1/3}$, we obtain the lower bound corresponding to $\left(\frac{ \sigma^2 V}{n}\right)^{2/3}$ under the maximum in \eqref{eq:lower-n23}. On the other hand, if  $16 n V^2 / \sigma^2 < 1$ the term $\frac{\sigma^2}{n}$ is dominant, so that we need to have the lower bound of the order $\frac{\sigma^2}{n}$, which is trivial (it follows from a reduction to the bound for the class composed of two constant functions).

It follows from \eqref{eq:lower-n23}
and \eqref{eq:cor:mon1}  that 
the estimator $\hmu^Q$ 
 achieves, up to logarithmic factors,  the optimal rate with respect to the minimax risk
on the class $\increasings(V)$.
Using \eqref{eq:cor:mon2} and the fact that the minimax risk is smaller than the minimax regret,
we conclude that it is also the optimal rate up to logarithmic factors for the minimax regret.

\begin{proof}[Proof of \Cref{prop:lower}]
    We assume for simplicity that $n$ is a multiple of $k$. The general case is treated analogously.
    For any $\vomega,\vomega'\in\{0,1\}^k$, 
    let $d_H(\vomega,\vomega') = |\{i=1,...,k: \omega_i \ne \omega_i' \}|$
    be the Hamming distance between $\vomega$ and $\vomega'$.
    By the Varshamov-Gilbert bound \cite[Lemma 2.9]{tsybakov2009introduction},
    there exists a set $\Omega \subset \{0,1\}^k$ such that 
    \begin{equation}
        {\bf 0} = (0,...,0)\in\Omega,
        \quad
        \log (|\Omega| - 1)\ge k/8,
        \quad
        \text{and}
        \quad
        d_H(\vomega,\vomega') > k/8  
        \label{eq:varshamov}
    \end{equation}
    for any two distinct $\vomega, \vomega' \in\Omega$.
       For each $\vomega\in\Omega$,
    define a vector $\vu^\vomega\in\Rn$  with components
    \begin{equation*}
        u^\vomega_i = \frac{\lfloor (i-1)k/n \rfloor \; V}{2k} + \gamma \omega_{\lfloor (i-1)k/n \rfloor + 1},
        \qquad
        i=1,...,n,
    \end{equation*}
    where 
    $\gamma = (1/8) \sqrt{\sigma^2 k / n}$, and $\lfloor x\rfloor$ denotes the maximal integer smaller than $x$.
    For any $\vomega\in\Omega$, $\vu^\vomega$ is a piecewise constant sequence with $k(\vu^\vomega)\le k$,
    $\vu^\vomega$ is a non-decreasing sequence because $\gamma \le V/(2k)$,
    and by construction $V(\vu^\vomega)\le V$. Thus, $\vu^\vomega \in \increasings_k\cap\increasings(V)$ for all  $\vomega\in\Omega$.
    Moreover, for any $\vomega,\vomega'\in\Omega$,
    \begin{equation}\label{varsham}
        \|\vu^\omega - \vu^{\vomega'}\|^2
        =
        \frac{\gamma^2}{k}
        d_H(\vomega,\vomega')
        \ge
        \frac{\gamma^2}{8} = \frac{\sigma^2 k}{512n}.
    \end{equation}
    Set for brevity $P_\vomega={\mathbb P}_{\vu^\omega}$.     The Kullback-Leibler divergence $K(P_\vomega,P_{\vomega'})$ between $P_\vomega$ and $P_{\vomega'}$
    is equal to $\frac{n}{2\sigma^2}  \|\vu^\omega - \vu^{\vomega'}\|^2$ for all $\vomega,\vomega'\in\Omega$.
    Thus,
    \begin{equation}\label{Kullback}
        K(P_\vomega,P_{\bf 0})
        =
        \frac{\gamma^2 n d_H({\bf 0}, \vomega)}{2 k \sigma^2}
        \le \frac{k}{128}
        \le \frac{\log(|\Omega| - 1)}{16}.
    \end{equation}
    Applying \cite[Theorem 2.7]{tsybakov2009introduction} with $\alpha = 1/16$ completes the proof.
\end{proof}

\section{Estimation of convex sequences by aggregation}\label{sec:convex}

Assume that $n\ge 3$ and define the set of convex sequences $\convexs$ as follows:
\begin{equation}
    \convexs = \{ \vu=(u_1,\dots,u_n)\in\Rn: \; 2u_i \le u_{i+1} + u_{i-1}, \  i=2,\dots,n-1 \}.
\end{equation}
For any $\vu\in\Rn$,
we introduce the integer $q(\vu)\ge 1$ such that $q(\vu)- 1$ is the cardinality of the set
$\{i=1,...,n-1: 2u_i \ne u_{i+1} + u_{i-1} \}$.
If $\vu\in\convexs$, $q(\vu)-1$ is the number of inequalities $2u_i \le u_{i+1} + u_{i-1}$ that are strict for $i=2,...,n-1$.
The value $q(\vu)$ is small if $\vu$ is a piecewise linear sequence with a small number of pieces.

The performance of the least squares estimator over convex sequences $\hmu^{LS}(\convexs)$ has been recently studied in \cite{guntuboyina2013global}.
If the unknown vector $\vmu$ belongs to the set $\convexs$, \cite{guntuboyina2013global} shows that
the estimator $\hmu^{LS}(\convexs)$
satisfies the risk bound
\begin{equation*}
    \E\scalednorms{\hmu^{LS}(\convexs) - \vmu}
    \le
    c
    \log(en)^{5/4}
    \left(\frac{\sigma^2 \sqrt{R(\vmu)}}{n}\right)^{4/5},
\end{equation*}
where $R(\vmu) = \max(1, \min \{\scalednorms{\boldsymbol{\tau} - \vmu}, \boldsymbol{\tau} \text{ is affine}\})$ and $c>0$ is an absolute constant.
It is proved in \cite[Example 2.3]{chatterjee2013risk} that the least squares estimator $\hmu^{LS}(\convexs)$ satisfies the oracle inequality
\begin{equation}
    \E\scalednorms{\hmu^{LS}(\convexs) - \vmu}
    \le
    6
    \min_{\vu\in\convexs}
    \left(
        \scalednorms{\vu-\vmu} + \frac{c\sigma^2 q(\vu) \log\left(\frac{en}{q(\vu)}\right)^{5/4}}{n}
    \right),
    \label{eq:oi-lse-convex}
\end{equation}
where $c>0$ is an absolute constant.
The right hand side of \eqref{eq:oi-lse-convex} 
is small if the unknown vector $\vmu$ can be well approximated
by a piecewise linear sequence in $\convexs$ with not too many pieces.

The leading constant in \eqref{eq:oi-lse-convex} is 6.
We will show that sparsity pattern aggregation achieves a substantially better performance.
We obtain the sharp oracle inequality \eqref{eq:soi-q-convex} below,
improving upon \eqref{eq:oi-lse-convex} not only in the fact that the leading constant is 1 but also in the rate of the
remainder term; we will see that the exponent $5/4$ of the logarithmic factor is reduced to 1.

For any set $J\subseteq\{2,...,n-1\}$, define
\begin{equation}
    \nu_J \coloneqq \frac{\exp(-|J|)}{H_C \; {n-2 \choose |J|}},
        \qquad
        H_C \coloneqq \sum_{i=0}^{n-2} \exp(-i).
    \label{eq:prior-convex}
\end{equation}
Let $Q_J\in \R^{n\times n}$ be the projector on the linear subspace $W_J$ of $\Rn$ given by
\begin{equation*}
    W_J \coloneqq \Big\{\vu\in\Rn: \; \forall i \in \{2,...,n-1\} \setminus J,\; 2u_i = u_{i+1} + u_{i-1} \Big\}.
\end{equation*}
Given a vector $\vy$ of observations and $\vtheta = (\theta_J)_{J\subseteq \{2,...,n-1\}}$ where each $\theta_J$ belongs to $\R$, let
\begin{equation*}
    \vmu_{\vtheta} = \sum_{J\subseteq\{ 2,...,n-1\}} \theta_J Q_J \vy.
\end{equation*}
Finally, let 
\begin{equation*}
\label{eq:def-q-convex}
    \hmu^{Q-conv} = \vmu_{\htheta}
\end{equation*} 
where $\htheta$ is the solution of the optimization problem
\begin{align*}
    \min_{\vtheta\in  \Lambda'} \ \
    \scalednorms{\vmu_\vtheta - \vy} 
    + \sum_{J\subset\{2,...,n-1\}} \theta_J 
    \left(
        \frac{2 \sigma^2 |J|}{n} + \frac{1}{2} \scalednorms{\vmu_\vtheta - Q_J\vy}
        + \frac{46 \sigma^2}{n} \log\frac{1}{\nu_J} 
    \right)
    \end{align*}
 where
    $$
    \Lambda'=\left\{\vtheta : \
     \theta_J \ge 0   \text{ for all } J\subseteq\{2,...,n-1\}, \text{ and }
              \sum_{J\subseteq\{2,...,n-1\}} \theta_J = 1
\right\}.
    $$
The structure of this minimization problem is the same as of its analog introduced in Section \ref{s:spa}. 
This is a quadratic program
that aggregates the linear estimators $(Q_J \vy)_{J\subseteq\{2,...,n-1\}}$
using the $Q$-aggregation procedure \cite{dai2012deviation,dai2014aggregation,bellec2014affine} with the prior weights \eqref{eq:prior-convex}.

\begin{theorem}
    \label{thm:q-convex}
    Let $\vmu\in\Rn$, $n\ge 3$, and assume that the noise vector $\vxi$ has distribution $\mathcal{N}(0, \sigma^2 I_{n\times n})$.
    There exist absolute constants $c,c'>0$ such that for all $\delta\in(0,1/3)$, the estimator $\hmu^{Q-conv}$ 
    satisfies with probability at least $1-3\delta$,
    \begin{equation} \label{eq:soi-q-convex0}
        \scalednorms{\hmu^{Q-conv} - \vmu} \le \min_{\vu\in\Rn}
        \left(
            \scalednorms{\vmu - \vu} +
            \frac{ c \sigma^2q(\vu)}{n} 
            \log \frac{en}{ q(\vu)}
        \right)
            + \frac{c\sigma^2 \log(1/\delta)}{n},
    \end{equation}
    and we have
    \begin{equation}
    \E \scalednorms{\hmu^{Q-conv} - \vmu} \le \min_{\vu\in\Rn}
        \left(
            \scalednorms{\vmu - \vu} + 
            \frac{ c' \sigma^2q(\vu)}{n} 
            \log \frac{en}{q(\vu)}
        \right).
        \label{eq:soi-q-convex}
    \end{equation}
\end{theorem}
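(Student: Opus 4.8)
The plan is to reproduce the proof of Theorem~\ref{thm:q-monotone} almost verbatim, replacing the piecewise-constant subspaces $V_J$, their projectors $P_J$ and the prior $\pi_J$ by the piecewise-affine subspaces $W_J$, the projectors $Q_J$ and the prior $\nu_J$. The first step is to identify the dimension of $W_J$. Each equation $2u_i=u_{i+1}+u_{i-1}$ imposed for $i\in\{2,\dots,n-1\}\setminus J$ is a second-difference constraint, and these $(n-2)-|J|$ constraints are linearly independent (they are a subset of the rows of the full-rank second-difference operator $\Rn\to\R^{n-2}$). Hence $\dim W_J=|J|+2$. Writing $d=|J|+2$ and using that $Q_J\vmu$ is the orthogonal projection of $\vmu$ onto $W_J$, the sub-Gaussian quadratic-form bound of \cite{hsu2012tail} invoked in \eqref{eq:th21_1}, together with $d+2\sqrt{d\log(1/\delta)}+2\log(1/\delta)\le 2d+3\log(1/\delta)$, gives with probability at least $1-\delta$
\[
    \scalednorms{Q_J\vy-\vmu}\le \min_{\vu\in W_J}\scalednorms{\vu-\vmu}+\frac{2(|J|+2)+3\log(1/\delta)}{n}.
\]

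Next I would apply the sharp $Q$-aggregation oracle inequality of \cite{bellec2014affine} to the family of linear estimators $(Q_J\vy)_{J\subseteq\{2,\dots,n-1\}}$ weighted by the prior $\nu_J$; since each $Q_J$ is a projector, this is exactly the situation covered in \eqref{eq:th21_2}, so with probability at least $1-2\delta$, simultaneously for all $J$,
\[
    \scalednorms{\hmu^{Q-conv}-\vmu}\le \scalednorms{Q_J\vy-\vmu}+C\sigma^2\log\tfrac{1}{\nu_J}+C\sigma^2\log(1/\delta).
\]
The prior term is handled exactly as in the monotone case: from $\nu_J=\exp(-|J|)/(H_C\binom{n-2}{|J|})$, the estimate $\log\binom{n-2}{|J|}\le|J|\log(en/|J|)$ and $H_C<2$ one obtains $\log(1/\nu_J)\le c(|J|+2)\log\frac{en}{|J|+2}+c'$, the analogue of the bound \cite[(5.4)]{rigollet2012sparse} used above. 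A union bound over the two events then yields, with probability at least $1-3\delta$,
\[
    \scalednorms{\hmu^{Q-conv}-\vmu}\le \min_{J}\min_{\vu\in W_J}\Big(\scalednorms{\vmu-\vu}+\frac{c\sigma^2(|J|+2)}{n}\log\frac{en}{|J|+2}\Big)+c\sigma^2\log(1/\delta).
\]

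The final step is the translation into $q(\vu)$. For any $\vu\in\Rn$, take $J$ to be its set of active kinks, so that $\vu\in W_J$ with $|J|=q(\vu)-1$; this shows $\min_{J}\min_{\vu\in W_J}$ is bounded above by the corresponding minimum over $\vu\in\Rn$. Here $|J|+2=q(\vu)+1\le 2q(\vu)$, and since $x\mapsto x\log(en/x)$ is increasing on $[1,n]$, the remainder $\frac{|J|+2}{n}\log\frac{en}{|J|+2}$ is at most $\frac{2q(\vu)}{n}\log\frac{en}{q(\vu)}$, which only doubles the absolute constant and produces \eqref{eq:soi-q-convex0}. Inequality \eqref{eq:soi-q-convex} then follows by integrating the deviation bound over $\delta$, exactly as \eqref{eq:soi-q} was deduced from \eqref{eq:soi-q-deviation}. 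The only genuinely convexity-specific ingredients are the dimension count $\dim W_J=|J|+2$ and the resulting shift from $q(\vu)$ to $q(\vu)+1$; this shift is the one bookkeeping point I would take care to track, but it is harmless as it is absorbed into the absolute constant, and every other step is identical to the proof of Theorem~\ref{thm:q-monotone}.
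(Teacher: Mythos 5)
Your proposal is correct and follows essentially the same route as the paper, which simply states that the proof of \Cref{thm:q-convex} is that of \Cref{thm:q-monotone} with $\{1,\dots,n-1\}$, $P_J$, $V_J$ replaced by $\{2,\dots,n-1\}$, $Q_J$, $W_J$. You in fact supply the one detail the paper leaves implicit, namely the dimension count $\dim W_J=|J|+2$ and the resulting harmless shift from $q(\vu)$ to $q(\vu)+1$, which is handled correctly.
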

The proof of this theorem is the same as that of \Cref{thm:q-monotone} with the only difference that $J$ is now a subset of $\{2,...,n-1\}$ rather than that of $\{1,...,n-1\}$, and we replace the notation $P_J $ and $V_J$ by $Q_J $ and $W_J$ respectively.

The leading constant of the oracle inequality \eqref{eq:soi-q-convex} is 1,
and the remainder term is proportional to $q(\vu)\log(en/q(\vu))$.
These are two improvements upon \eqref{eq:oi-lse-convex},
where the leading constant is 6
and the remainder term is proportional to $q(\vu)\log(en/q(\vu))^{5/4}$. 

In view of \eqref{eq:soi-q-convex}, for the class of piecewise linear convex sequences with at most $q$ linear pieces, $\convexs_q = \{\vu\in \convexs: q(\vu)\le q\}$ we have the following bounds for the maximal expected regrets 
\begin{align}  
       \max_{\vmu\in\Rn} \left(\E\|\hmu^Q - \vmu\| - \min_{\vu\in \convexs_q}\Vert\vu - \vmu\Vert\right) & \le c\sqrt{\frac{\sigma^2q}{n} \log\left(\frac{en}{q}\right)}, \label{eq:max_expected_regret3}
       \\
       \max_{\vmu\in\Rn} \left(\E\|\hmu^Q - \vmu\|^2 - \min_{\vu\in \convexs_q}\Vert\vu - \vmu\Vert^2 \right) &\le \frac{c\sigma^2q}{n} \log\left(\frac{en}{q}\right), \label{eq:max_expected_regret4}
    \end{align}
where $c>0$ is an absolute constant.
The same bounds hold for the minimax risks over $\convexs_q$ since the minimax risk is smaller than  the minimax regret.

The following proposition shows
that the rates of convergence in \eqref{eq:max_expected_regret3} and \eqref{eq:max_expected_regret4} are optimal up to logarithmic factors.
We omit the discussion since it is similar to that after \Cref{prop:lower}.

\begin{proposition}
    \label{prop:lower-convex}
    Let $n\ge 3$. There exist absolute constants $c,c'>0$ such that, for any positive integer $q\le n$,
    \begin{equation}
        \inf_{\hmu}
        \sup_{\vmu\in\convexs_q}
        \mathbb{P}_{\vmu}
        \left(
           \|{\hmu - \vmu}\|^2 \ge \frac{c \sigma^2 q}{n}
        \right) > c'
        ,
        \label{eq:lower-q-over-n-convex}
    \end{equation}
    where the infimum is taken over all estimators.
\end{proposition}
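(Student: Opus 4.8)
The plan is to mirror the proof of \Cref{prop:lower}: construct a finite family of convex, piecewise linear sequences with at most $q$ pieces that are pairwise well separated in $\scalednorm{\cdot}$ yet mutually close in Kullback--Leibler divergence, and then invoke \cite[Theorem 2.7]{tsybakov2009introduction}. The one genuinely new ingredient is a set of convex-compatible \emph{local} perturbations, which I would take to be tent-shaped (``$\vee$'') bumps in place of the piecewise constant bumps used in the monotone case. Note that here there is no range constraint analogous to $\increasings(V)$, so we are free to choose all magnitudes.

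Concretely, assume for simplicity that $n$ is a multiple of $r := \lfloor q/2 \rfloor$ and split $\{1,\dots,n\}$ into $r$ consecutive blocks $B_1,\dots,B_r$ of equal length $\ell = n/r$. On each block I would place a convex tent $\phi_j$, equal to $\gamma(|i-c_j| - \ell/2)$ for $i\in B_j$ (with $c_j$ the block center) and to $0$ outside $B_j$; each $\phi_j$ is convex, is supported on $B_j$, and introduces exactly one new convex kink at $c_j$ together with a concave kink of size $-\gamma$ at each endpoint of $B_j$. I would then fix a convex piecewise linear base $\vmu^0$ having a kink of curvature $\rho\ge 2\gamma$ at every block boundary, and for $\vomega\in\{0,1\}^r$ set $\vu^\vomega = \vmu^0 + \sum_{j=1}^r \omega_j \phi_j$. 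The choice $\rho\ge 2\gamma$ guarantees that the concave kinks of the active tents are absorbed, so every $\vu^\vomega$ is genuinely convex; counting the $r-1$ boundary kinks together with the at most $r$ tent kinks gives $q(\vu^\vomega)\le 2r\le q$, hence $\vu^\vomega\in\convexs_q$.

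Because the tents have pairwise disjoint supports, the empirical norm is additive over blocks, so that $\scalednorms{\vu^\vomega - \vu^{\vomega'}} = d_H(\vomega,\vomega')\,\scalednorms{\phi_1}$ with $\scalednorms{\phi_1}\asymp \gamma^2\ell^3/n$. I would tune $\gamma$ so that $\scalednorms{\phi_1} = \beta\,\sigma^2/n$ for a small absolute constant $\beta$. Applying the Varshamov--Gilbert bound \cite[Lemma 2.9]{tsybakov2009introduction} to $\{0,1\}^r$ yields $\Omega\ni\mathbf 0$ with $\log(|\Omega|-1)\ge r/8$ and pairwise Hamming distance $>r/8$, whence the separation satisfies $\scalednorms{\vu^\vomega-\vu^{\vomega'}}\ge (r/8)\beta\sigma^2/n \gtrsim \sigma^2 q/n$. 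At the same time $K(P_\vomega,P_{\mathbf 0}) = \frac{n}{2\sigma^2}\scalednorms{\vu^\vomega-\vu^{\mathbf 0}}\le \frac{n}{2\sigma^2}\, r\,\scalednorms{\phi_1} = \frac{\beta}{2}\,r$, so that for $\beta$ small enough $K(P_\vomega,P_{\mathbf 0})\le \frac{1}{16}\log(|\Omega|-1)$ and \cite[Theorem 2.7]{tsybakov2009introduction} applies with $\alpha=1/16$, giving \eqref{eq:lower-q-over-n-convex}.

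The step I expect to be the main obstacle is the convexity bookkeeping in the second paragraph: unlike in the monotone case, a convex perturbation cannot be localized, since a nonnegative second difference summing to zero must vanish identically. Consequently the tents necessarily create concave kinks at the block endpoints, and one must introduce the background curvature $\rho$ of the base purely to absorb them, while simultaneously keeping the total number of kinks below $q$ and the tent supports disjoint. Verifying these three requirements at once --- convexity of each $\vu^\vomega$, the piece count $q(\vu^\vomega)\le q$, and support-disjointness for norm additivity --- is the only part that differs from \Cref{prop:lower}; the Varshamov--Gilbert packing and the final application of \cite[Theorem 2.7]{tsybakov2009introduction} are carried out exactly as there.
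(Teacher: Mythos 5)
Your proposal is correct in strategy and in all the quantitative steps (disjoint supports give additivity of the squared norm, Varshamov--Gilbert gives the packing, and the KL/separation trade-off yields the rate $\sigma^2 q/n$), but the construction is genuinely different from the paper's. The paper does not use tent perturbations at all: it shifts each of $q$ blocks vertically by $\omega_{j+1}\gamma$ (a \emph{constant} bump per block) and superimposes this on a convex piecewise linear base whose slope increases by exactly $2\gamma$ across each block boundary. A constant bump perturbs the second difference only at the two indices adjacent to each boundary, by at most $\gamma$ in absolute value, so the $2\gamma$ slope increment of the base absorbs it; crucially, a constant bump creates \emph{no} interior kink, so the resulting sequences have exactly $q$ linear pieces and one can take $\gamma=(1/8)\sqrt{\sigma^2 q/n}$ with the separation computed directly as $\gamma^2 d_H(\vomega,\vomega')/q$. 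Your tent construction buys nothing here and costs an extra kink at each block center, which is why you are forced down to $r=\lfloor q/2\rfloor$ blocks and to the more delicate calibration $\gamma^2\ell^3/n\asymp\sigma^2/n$.

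There is one slip in your convexity bookkeeping, though it only affects constants. The concave side-effects of the tent $\phi_j$ sit at \emph{two distinct indices}, the first and the last point of $B_j$, and the second-difference constraint $2u_i\le u_{i+1}+u_{i-1}$ must hold at each index separately; a single base kink ``at the block boundary'' cannot absorb the concave kinks at the adjacent indices $j\ell$ and $j\ell+1$ simultaneously. The base therefore needs convex kinks at roughly $2(r-1)$ indices, and together with the $r$ tent centers this gives $q(\vu^\vomega)\le 3r-1$, so you must take $r\asymp q/3$ rather than $\lfloor q/2\rfloor$. With that correction (and $\rho\ge\gamma$ at each of those indices, since each index receives the concave contribution of at most one tent), your argument goes through and yields \eqref{eq:lower-q-over-n-convex} with different absolute constants.
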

\begin{proof} Assume that $q\ge 2$ since for $q=1$ the result is trivial.
    We also assume for simplicity that $n$ is a multiple of $q$.
    Let $m=n/q$
    and
    $\gamma = (1/8) \sqrt{\sigma^2 q / n}$.
    Set $\beta_0 = 0, \alpha_0 = 0$ and define, for all integers $j\ge 1$,
    \begin{equation}
        \beta_j =\beta_{j-1} + \gamma + m \alpha_{j-1},
        \qquad
        \alpha_j = 2 \gamma + \alpha_{j-1}.
        \label{eq:def-alpha-beta}
    \end{equation}
    By the Varshamov-Gilbert bound \cite[Lemma 2.9]{tsybakov2009introduction} there exists
    $\Omega\subset \{0,1\}^q$ such that \eqref{eq:varshamov} is satisfied, with $k$ replaced by $q$.
    For each $\vomega\in\Omega$,
    define a vector $\vu^\vomega\in\Rn$  with components
    \begin{equation*}
        u^\vomega_{jm+i} = \omega_{j+1} \gamma + \alpha_j (i-1) + \beta_j,
        \qquad
        j=0,...,q-1,
        \quad
        i=1,...,m.
    \end{equation*}
 The sequence $\vu^\vomega$ is piecewise linear.
   It is linear with slope $\alpha_j$ on the set $\{jm+1,...,(j+1)m \}$ for any $j=0,...,q-1$. Thus, $q(\vu^\vomega)=q$.
    Next, we prove that $\vu^\vomega\in \convexs$ for all $\vomega\in \Omega$. It is enough to check the convexity condition at the endpoints of the linear pieces:
     \begin{align}
         \label{12}
         2u^\vomega_{jm}\le u^\vomega_{jm-1} + u^\vomega_{jm+1},
         \qquad
         2u^\vomega_{jm+1}\le u^\vomega_{jm} + u^\vomega_{jm+2},
     \end{align}
     for all $j=1,...,q-1$.
     Using
    \eqref{eq:def-alpha-beta} we get
    that, for all $j=1,...,q-1$,
    \begin{align*}
        u^\vomega_{jm+1}-u^\vomega_{jm}
       &= \omega_{j+1}\gamma + \beta_j  - (\omega_{j}\gamma   + \alpha_{j-1}(m-1) + \beta_{j-1}),
        \\
        &= (\omega_{j+1} - \omega_{j}+1)\gamma + \alpha_{j-1}, \\
        &= (\omega_{j+1} - \omega_{j} - 1)\gamma + \alpha_{j}.
    \end{align*}
    Hence, $\alpha_{j-1} \le u^\vomega_{jm+1}-u^\vomega_{jm} \le \alpha_j$.
    Since also $\alpha_{j-1} = u^\vomega_{jm}-u^\vomega_{jm-1}$
    and $\alpha_j = u^\vomega_{jm+2}-u^\vomega_{jm+1}$,
    it follows that the two inequalities \eqref{12} hold, for all $j=1,...,q-1$. Thus, 
    $\vu^\vomega\in\convexs$. In summary, we have proved that $\vu^\vomega\in \convexs_q$ for all $\vomega\in \Omega$.   
    
    Now, from the Varshamov-Gilbert bound, cf. \eqref{eq:varshamov},  for $\vomega,\vomega'\in\Omega$ we have
    \begin{equation}
        \|\vu^\omega - \vu^{\vomega'}\|^2
        =
        \frac{\gamma^2}{q}
        d_H(\vomega,\vomega')
        \ge
        \frac{\gamma^2}{8} = \frac{\sigma^2 q}{512n},
    \end{equation}
    where $d_H(\cdot,\cdot)$ is the Hamming distance.
    Finally, similarly to \eqref{Kullback}, the Kullback-Leibler divergence between $P_\vomega$  and $P_{\bf 0}$ satisfies
        $
        K(P_\vomega,P_{\bf 0})
                \le \frac{\log(|\Omega| - 1)}{16}$.
    Applying \cite[Theorem 2.7]{tsybakov2009introduction} with $\alpha = 1/16$ completes the proof.
\end{proof}

\section{Concluding remarks and discussion}
In this short note,
we have shown that the estimators
$\hmu^{Q}$ and $\hmu^{Q-conv}$
based on 
sparsity pattern aggregation (in its $Q$-aggregation version)
achieve oracle inequalities
that improve on some previous results for isotonic and convex regression.

One of the improvements is that oracle inequalities 
\eqref{eq:soi-q} and \eqref{eq:soi-q-convex} are sharp, i.e., 
with leading constant  1 and they are valid for all $\vmu\in\Rn$.
It allows us to obtain bounds for the minimax regret under arbitrary model misspecification, which was not possible based on the previous results. We show that these bounds are rate optimal up to logarithmic factors.
The question on whether the least squares estimators under monotonicity and convexity constraints can achieve sharp oracle inequalities 
with correct rates remains open. 

Another improvement is that we obtain oracle inequalities both with high probability and in expectation, which was not the case in the previous work. 

An advantage of the least squares estimator is that it requires
no tuning parameters.
In particular, the knowledge of $\sigma^2$
is not needed to construct
the estimators $\hmu^{LS}(\increasings)$ and $\hmu^{LS}(\convexs)$.
This is in contrast to the $\ell_1$ penalized estimator \eqref{eq:def-tv} and the estimators $\hmu^{Q}$ and $\hmu^{Q-conv}$;
their construction requires the knowledge of $\sigma^2$.
For the $\ell_1$ penalized estimator \eqref{eq:def-tv},
the issue may be addressed by using a scale-free 
version of the Lasso \cite{belloni2014pivotal,sun2012scaled}.
For the $Q$-aggregation estimators $\hmu^{Q}$ and $\hmu^{Q-conv}$, we can treat the issue of unknown $\sigma$ as in \cite{bellec2014affine}. Namely, it is shown in \cite{bellec2014affine}
that the oracle inequalities for $Q$-aggregation procedures are essentially preserved after plugging in an estimator $\hat\sigma^2$ of $\sigma^2$ that
satisfies $|\hat\sigma^2/\sigma^2 - 1| \le 1/8$ with high probability, which 
is even weaker than consistency.

Finally, note that instead of $Q$-aggregation we could have used sparsity pattern aggregation by the Exponential Screening procedure of \cite{RigolletTsybakov2011}. This would lead to sharp oracle inequalities in expectation of the form \eqref{eq:soi-q} and \eqref{eq:soi-q-convex}
but not to inequalities with high probability such as \eqref{eq:soi-q-deviation} and \eqref{eq:soi-q-convex0}. This is the reason why we have opted for $Q$-aggregation rather than for Exponential Screening in this paper. On the other hand, Exponential Screening estimators are computationally more attractive than $Q$-aggregation since they can be successfully approximated by MCMC algorithms (see \cite{RigolletTsybakov2011,rigollet2012sparse} for details).

\smallskip

{\bf Acknowledgement.}
This work was supported by GENES and by the French National Research Agency (ANR) under the grants 
IPANEMA (ANR-13-BSH1-0004-02), and Labex ECODEC (ANR - 11-LABEX-0047). It was also supported by the "Chaire Economie et Gestion des Nouvelles Donn\'ees", under the auspices of Institut Louis Bachelier, Havas-Media and Paris-Dauphine.

\bibliography{bibliocontent}

\end{document}